\documentclass[11pt]{article}
\usepackage{amsmath, amsthm, amscd, amsfonts, amssymb, graphicx,enumerate,color}

\setlength{\topmargin}{-45pt}      
\setlength{\oddsidemargin}{0cm}    
\setlength{\evensidemargin}{0cm}   
\setlength{\textheight}{23.7cm}    
\setlength{\textwidth}{16cm}       

\theoremstyle{definition}
\newtheorem{theorem}{Theorem}[section]

\newtheorem{corollary}[theorem]{Corollary}
\newtheorem{proposition}[theorem]{Proposition}
\newtheorem{remark}[theorem]{Remark}

\begin{document}
\title{Refined inequalities on the weighted logarithmic mean}
\author{Shigeru Furuichi$^1$\footnote{E-mail:furuichi@chs.nihon-u.ac.jp} and Nicu\c{s}or Minculete$^2$\footnote{E-mail:minculeten@yahoo.com}\\
$^1${\small Department of Information Science,}\\
{\small College of Humanities and Sciences, Nihon University,}\\
{\small 3-25-40, Sakurajyousui, Setagaya-ku, Tokyo, 156-8550, Japan}\\
$^2${\small Transilvania University of Bra\c{s}ov, Bra\c{s}ov, 500091, Rom{a}nia}}
\date{}
\maketitle
{\bf Abstract.} Inspired by the recent  work by R.Pal et al., we give further refined inequalities for a convex Riemann integrable function, applying the standard Hermite-Hadamard inequality. Our approach is different from their one in \cite{PSMA2016}.
As corollaries, we give the refined inequalities on the weighted logarithmic mean and weighted identric mean.  
Some further extensions are also given.
\vspace{3mm}

{\bf Keywords : } Weighted logarithmic mean, weighted identric mean, convex function, Hermite-Hadamard inequality and operator inequality
\vspace{3mm}

{\bf 2010 Mathematics Subject Classification : } Primary 26D15, secondary 26B25, 26E60.  
\vspace{3mm}


\section{Introduction}
The inequalities on means attract many mathematicians for their depelopments. See \cite{FM2020} and references therein for example.
Recently, in  \cite[Theorem 2.2]{PSMA2016}, the weighted logarithmic mean was properly introduced and 
the inequalities among weighted means were shown as
\begin{equation}\label{sec1_eq01}
a\sharp_v b \leq L_v(a,b) \leq a\nabla_v b,
\end{equation}
where the weighted geometric mean is defined by $a \sharp_v b := a^{1-v}b^v $, the weighted arithmetic mean by
$a\nabla_v b :=(1-v)a+v b$ and the weighted logarithmic mean by \cite{PSMA2016}:
\begin{equation}\label{sec1_eq02}
L_v(a,b) := \frac{1}{\log a-\log b}\left(\frac{1-v}{v}(a-a^{1-v}b^v)+\frac{v}{1-v}(a^{1-v}b^v-b)\right)
\end{equation}
for $a,b >0$ and $v \in (0,1)$.
We easily find that $L_{1/2}(a,b)=\dfrac{a-b}{\log a-\log b}$ ($a\neq b$), with $L_{1/2}(a,a):=a$. This is the so-called logarithmic mean. We also find that $\lim\limits_{v\to 0}L_v(a,b) = a$ and $\lim\limits_{v\to 1}L_v(a,b) = b$.  Thus the inequalities given in \eqref{sec1_eq01} recover the well-known relations:
$$
\sqrt{ab}\leq \frac{a-b}{\log a-\log b} \leq \frac{a+b}{2}\,\,(a,b>0).
$$
We use the  symbols $\nabla$ and $\sharp$ simply, instead of $\nabla_{1/2}$ and $\sharp_{1/2}$.

R.Pal et al. obtained the inequalities given in \eqref{sec1_eq01} by their general result given in 
\cite[Theorem 2.1]{PSMA2016} which can be regarded as the generalization of the famous Hermite-Hadamard inequality with weight $v\in[0,1]$:
 \begin{equation}\label{sec1_eq03}
 f(a\nabla_v b)\leq C_{f,v}(a,b) \leq f(a)\nabla_vf(b)
 \end{equation}
 where
  \begin{equation}\label{sec1_eq04}
 C_{f,v}(a,b) := \left(\int_0^1 f\left(a\nabla_{vt} b\right)dt\right) \nabla_v \left(\int_0^1f\left((1-v)(b-a)t+a\nabla_v b\right)dt\right)
  \end{equation}
 for a convex Riemann integrable function, $a,b>0$ and $v\in [0,1]$. By elementary calculations, we find that the inequalities given in \eqref{sec1_eq03} recover the standard Hermite-Hadamard inequalities:
   \begin{equation}\label{sec1_eq05}
   f\left(\frac{a+b}{2}\right)\leq \frac{1}{b-a}\int_a^bf(t)dt \leq \frac{f(a)+f(b)}{2}.
   \end{equation}
   
 In this paper, we give a refinement of the ineqaulities given in \eqref{sec1_eq03} and as its consequence, we imply refined inequalities on the weighted logarithmic mean.
 
\section{Main results}
We firstly give the refined inequalities for \eqref{sec1_eq03} by repeating use of  the standard Hermite-Hadamard inequalities given in \eqref{sec1_eq05}.
\begin{theorem}\label{sec2_theorem01}
For every convex Riemann integrable function $f:[a,b]\to \mathbb{R}$ and $v\in[0,1]$, we have
\begin{equation}\label{sec2_eq01}
f\left(a\nabla_vb\right) \leq R^{(1)}_{f,v} (a,b) \leq C_{f,v}(a,b) \leq R^{(2)}_{f,v} (a,b) \leq f(a)\nabla_v f(b),
\end{equation}
where
\begin{equation}\label{sec2_eq02}
R^{(1)}_{f,v} (a,b) := f(a\nabla_{\frac{v}{2}}b)\nabla_vf(a\nabla_{\frac{1+v}{2}}b)
\end{equation}
and
\begin{equation}\label{sec2_eq03}
R^{(2)}_{f,v} (a,b) := \left(f(a)\nabla_v f(b)\right)\nabla\left( f(a \nabla_v b) \right).
\end{equation}
\end{theorem}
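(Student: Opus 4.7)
The plan is to rewrite both integrals in $C_{f,v}(a,b)$ as genuine integral averages over subintervals whose endpoints involve $a\nabla_v b$, and then apply the standard Hermite--Hadamard inequality \eqref{sec1_eq05} to each piece. For the first integral, I would substitute $u = vt$ and then $x = (1-u)a + ub$, which turns $\int_0^1 f(a\nabla_{vt}b)\,dt$ into $\frac{1}{(a\nabla_v b)-a}\int_a^{a\nabla_v b} f(x)\,dx$, i.e.\ the average of $f$ on $[a, a\nabla_v b]$. Similarly, the substitution $s = (1-v)(b-a)t + a\nabla_v b$ identifies $\int_0^1 f((1-v)(b-a)t + a\nabla_v b)\,dt$ with the average of $f$ on $[a\nabla_v b, b]$. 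Writing these two averages as $M_1$ and $M_2$, we have $C_{f,v}(a,b) = M_1\nabla_v M_2$, and the work reduces to bounding $M_1$ and $M_2$.

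Next I would apply Hermite--Hadamard on the two subintervals. The midpoint of $[a, a\nabla_v b]$ is $a\nabla_{v/2}b$ and the midpoint of $[a\nabla_v b, b]$ is $a\nabla_{(1+v)/2}b$, so \eqref{sec1_eq05} gives
\begin{equation*}
f(a\nabla_{v/2}b) \leq M_1 \leq \tfrac{f(a)+f(a\nabla_v b)}{2}, \qquad f(a\nabla_{(1+v)/2}b) \leq M_2 \leq \tfrac{f(a\nabla_v b)+f(b)}{2}.
\end{equation*}
Taking the weighted $\nabla_v$-combination of the lower bounds yields $R^{(1)}_{f,v}(a,b) \leq C_{f,v}(a,b)$, and taking the $\nabla_v$-combination of the upper bounds produces
\begin{equation*}
C_{f,v}(a,b) \leq \bigl(\tfrac{f(a)+f(a\nabla_v b)}{2}\bigr)\nabla_v \bigl(\tfrac{f(a\nabla_v b)+f(b)}{2}\bigr),
\end{equation*}
which after an easy rearrangement collapses to $R^{(2)}_{f,v}(a,b)$.

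It remains to verify the two outer inequalities. For the leftmost one, I would check the algebraic identity $(a\nabla_{v/2}b)\nabla_v (a\nabla_{(1+v)/2}b) = a\nabla_v b$ by direct expansion; then the convexity of $f$ applied to this representation gives $f(a\nabla_v b) \leq R^{(1)}_{f,v}(a,b)$. For the rightmost one, I would use $f(a\nabla_v b) \leq f(a)\nabla_v f(b)$ (again by convexity) and substitute this into the definition of $R^{(2)}_{f,v}$ to deduce $R^{(2)}_{f,v}(a,b) \leq f(a)\nabla_v f(b)$.

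The only real obstacle is bookkeeping: getting the change-of-variables and the midpoint computations exactly right, and recognising that the $\nabla_v$-average of the two Hermite--Hadamard upper bounds telescopes into the neat form $R^{(2)}_{f,v}$. Everything else is a direct consequence of convexity and \eqref{sec1_eq05}, so no deeper tools should be needed.
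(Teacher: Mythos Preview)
Your proposal is correct and follows essentially the same approach as the paper: both split at the point $a\nabla_v b$, apply the standard Hermite--Hadamard inequality \eqref{sec1_eq05} on each subinterval, take the $\nabla_v$-combination, and then use convexity of $f$ for the two outer inequalities. The only cosmetic difference is that you perform the change of variables first and then apply Hermite--Hadamard, whereas the paper applies Hermite--Hadamard first and then makes the substitution to identify the result with $C_{f,v}(a,b)$.
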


\begin{proof}
Applying  the standard Hermite-Hadamard inequalities \eqref{sec1_eq05} on the two intervals
$[a,(1-v)a+vb]$ and $[(1-v)a+vb,b]$, we obtain respectively
\begin{equation}\label{sec2_eq04}
f\left(\frac{(2-v)a+vb}{2}\right)\leq\frac{1}{v(b-a)}\int_a^{(1-v)a+vb}f(t)dt\leq \frac{f(a)+f((1-v)a+vb)}{2}
\end{equation}
and
\begin{equation}\label{sec2_eq05}
f\left(\frac{(1-v)a+(1+v)b}{2}\right) \leq \frac{1}{(1-v)(b-a)}\int_{(1-v)a+vb}^bf(t)dt\leq \frac{f(b)+f((1-v)a+vb)}{2}.
\end{equation}
Multiplying both sides in \eqref{sec2_eq04} and \eqref{sec2_eq05} by $(1-v)$ and $v$ respectively and summing each side,  we obtain
\begin{equation}\label{sec2_eq06}
R^{(1)}_{f,v} (a,b)  \leq \frac{1-v}{v(b-a)}\int_a^{(1-v)a+vb}f(t)dt + \frac{v}{(1-v)(b-a)}\int_{(1-v)a+vb}^bf(t)dt \leq R^{(2)}_{f,v} (a,b),
\end{equation}
which is equivalent to 
\begin{equation}\label{sec2_eq07}
R^{(1)}_{f,v} (a,b)  \leq C_{f,v}(a,b) \leq R^{(2)}_{f,v} (a,b),
\end{equation}
by replacing the variables such as $t:=v(b-a)s+a$ in the first term and $t:=(1-v)(b-a)u+(1-v)a+vb$ in the second term of the integral parts in \eqref{sec2_eq06}.

Finally we estimate $R^{(1)}_{f,v} (a,b)$ and $R^{(2)}_{f,v} (a,b)$. Since the function $f$ is convex,
we have
$$
R^{(1)}_{f,v} (a,b) \geq f\left(\frac{\left((1-v)(2-v)+v(1-v)\right)a+\left(v(1-v)+v(1+v)\right)b}{2}\right) =f(a\nabla_vb)
$$   
and
$$
R^{(2)}_{f,v} (a,b) \leq \left(f(a)\nabla_v f(b)\right)\nabla\left(f(a)\nabla_v f(b)\right) =f(a)\nabla_v f(b).
$$
Thus we complete the proof.
\end{proof}

\begin{corollary}\label{sec2_corollary01}
For $a, b >0$ and $v\in (0,1)$, we have
\begin{equation}\label{sec2_eq08}
a\sharp_vb \leq \left(a\sharp_{\frac{v}{2}} b\right)\nabla_v \left(a \sharp_{\frac{1+v}{2}}b\right)\leq L_v(a,b)\leq \left(a\nabla_v b\right) \nabla\left(a\sharp_vb\right) \leq a \nabla_vb.
\end{equation}
\end{corollary}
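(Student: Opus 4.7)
The plan is to apply Theorem \ref{sec2_theorem01} to the convex function $f(x) = e^x$ on the interval with endpoints $\log a$ and $\log b$ (without loss of generality $a<b$; the case $a=b$ is trivial since all five expressions collapse to $a$). The point is that under the substitution $a \mapsto \log a$, $b \mapsto \log b$, the exponential converts the arithmetic mean of the logarithms into the geometric mean of $a$ and $b$, i.e.\ $f((\log a)\nabla_v(\log b)) = a\sharp_v b$, while obviously $f(\log a)\nabla_v f(\log b) = a\nabla_v b$. So the two outer terms in \eqref{sec2_eq01} transcribe directly to the two outer terms of \eqref{sec2_eq08}.

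Next I would compute the inner expressions. For $R^{(1)}_{f,v}$, applying the same substitution to \eqref{sec2_eq02} yields $(a\sharp_{v/2}b)\nabla_v(a\sharp_{(1+v)/2}b)$, matching the second expression in \eqref{sec2_eq08}. For $R^{(2)}_{f,v}$, \eqref{sec2_eq03} becomes $(a\nabla_v b)\nabla(a\sharp_v b)$, matching the fourth expression.

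The only step requiring genuine verification is that $C_{f,v}(\log a,\log b)$ equals $L_v(a,b)$. Setting $r = \log b - \log a$, a direct calculation of the two integrals in \eqref{sec1_eq04} gives
\[
\int_0^1 e^{(1-vt)\log a + vt\log b}\,dt = \frac{a^{1-v}b^v - a}{vr}, \qquad \int_0^1 e^{(1-v)rt + (1-v)\log a + v\log b}\,dt = \frac{b - a^{1-v}b^v}{(1-v)r},
\]
so that taking the weighted arithmetic $v$-mean of these two values and comparing with \eqref{sec1_eq02} (after using $\log a - \log b = -r$ to flip signs) recovers exactly $L_v(a,b)$. This is the main (and only nontrivial) computational step; it is elementary but is the point where the definition of the weighted logarithmic mean from \cite{PSMA2016} gets its justification.

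With these three identifications in hand, the chain \eqref{sec2_eq01} of Theorem \ref{sec2_theorem01} specializes term by term to \eqref{sec2_eq08}, completing the proof.
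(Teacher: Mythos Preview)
Your approach is essentially the same as the paper's: apply Theorem~\ref{sec2_theorem01} with $f(t)=e^t$, identify the five terms, and verify that $C_{\exp,v}$ yields $L_v$ via the explicit integral computation. The paper does exactly this (phrased as substituting $e^a\to a$, $e^b\to b$ rather than working on $[\log a,\log b]$, which is the same thing).

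The one point that is underjustified in your write-up is the claim ``without loss of generality $a<b$''. The five expressions in \eqref{sec2_eq08} are \emph{not} symmetric under $a\leftrightarrow b$ alone, since they depend on $v$; what holds instead is invariance under the simultaneous swap $(a,b,v)\mapsto(b,a,1-v)$, and this needs to be checked (in particular for $L_v$). The paper handles this explicitly: after proving the normalized inequalities for $t=b/a\ge1$, it substitutes $s=1/t$ and $u=1-v$ and verifies that the same chain holds for $0<s\le1$, thereby covering all $a,b>0$. You should either carry out this symmetry check or note that Theorem~\ref{sec2_theorem01} itself is invariant under swapping the interval endpoints together with $v\mapsto1-v$, so that applying it on $[\log b,\log a]$ with parameter $1-v$ gives exactly \eqref{sec2_eq08} when $a>b$.
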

\begin{proof}
Applying the convex function $f(t):=e^t$ in Theorem \ref{sec2_theorem01}, we have for $b \geq a >0$
\begin{eqnarray*}
&&e^{(1-v)a+vb} \leq (1-v)e^{\frac{(2-v)a+vb}{2}}+ve^{\frac{(1-v)a+(1+v)b}{2}}\leq (1-v)\int_0^1 e^{v(b-a)t+a}dt\\
&&+v\int_0^1 e^{(1-v)(b-a)t+(1-v)a+vb}dt \leq \frac{(1-v)e^a+ve^b+e^{(1-v)a+vb}}{2} \leq(1-v)e^a+ve^b.
\end{eqnarray*}
By elementary calculations, we have
\begin{eqnarray*}
&&(1-v)\int_0^1 e^{v(b-a)t+a}dt+v\int_0^1 e^{(1-v)(b-a)t+(1-v)a+vb}dt\\
&&= \frac{1-v}{v(b-a)}\left(e^{(1-v)a+vb}-e^a\right)+\frac{v}{(1-v)(b-a)}\left(e^b-e^{(1-v)a+vb}\right).
\end{eqnarray*}
Replacing $e^a$ and $e^b$ with $a$ and $b$ respectively, we obtain
the inequalities \eqref{sec2_eq08} for $b \geq a >0$ and $v\in (0,1)$.
Dividing both sides of the inequalities \eqref{sec2_eq08} by $a$ and putting $\frac{b}{a}:=t \geq 1$, we have
\begin{equation}\label{sec2_proof_theorem01_eq01}
t^{v} \leq (1-v)t^{\frac{v}{2}} +v t^{\frac{1+v}{2}}\leq L_v(1,t)\leq \frac{1}{2}\left((1-v)+vt + t^v\right) \leq (1-v)+vt,\,\,(t \geq 1,\,\,v\in(0,1)).
\end{equation}
Putting $s:=\frac{1}{t} \leq 1$ and $u:=1-v$, and then multiplying
 both sides by  $s>0$, we have
\begin{equation}\label{sec2_proof_theorem01_eq02}
s^{u} \leq (1-u)s^{\frac{u}{2}} +u s^{\frac{1+u}{2}}\leq L_u(1,s)\leq \frac{1}{2}\left((1-u)+us + s^u\right) \leq (1-u)+us\,\,(0< s \leq 1,\,\,u\in(0,1))
\end{equation}
by elementary calculations. Thus we have the inequalities: 
\begin{equation}\label{sec2_proof_theorem01_eq03}
t^{v} \leq (1-v)t^{\frac{v}{2}} +v t^{\frac{1+v}{2}}\leq L_v(1,t)\leq \frac{1}{2}\left((1-v)+vt + t^v\right) \leq (1-v)+vt \,\,
(t>0,\,\,v \in (0,1)).
\end{equation}
Therefore we complete the proof by putting $t:=\frac{b}{a}$ for any $a,b>0$ in \eqref{sec2_proof_theorem01_eq03} and then multiplying both sides by $a>0$.
\end{proof}

We note that the third and fourth inequalities have already been  given in \cite[Lemma 2.3]{PSMA2016}.
However, the first and second inequalities are new results. In addition, our approaches are different from the author's in \cite{PSMA2016}.

We give the inequalities on the weighted identric mean which was defined in \cite{PSMA2016}
as
\begin{equation}\label{sec2_eq10}
 I_v(a,b) := \frac{1}{e}\left(a\nabla_v b\right)^{\frac{(1-2v)\left(a\nabla_v b\right)}{v(1-v)(b-a)}}\left(\frac{b^{\frac{vb}{1-v}}}{a^{\frac{(1-v)a}{v}}}\right)^{\frac{1}{b-a}},\quad v\in(0,1).
\end{equation}
It is easy to check that $I_{1/2}(a,b)$ recovers the usual identric mean $I(a,b):=\frac{1}{e}\left(\frac{b^b}{a^a}\right)^{\frac{1}{b-a}}$,
with $\lim\limits_{v\to 0}I_v(a,b) = a$ and $\lim\limits_{v\to 1}I_v(a,b) = b$.

\begin{corollary}\label{sec2_corollary02}
For $a,b>0$ and $v\in (0,1)$, we have
\begin{equation}\label{sec2_eq09}
a\sharp_v b \leq \left(a\sharp_v b\right) \sharp\left(a \nabla_v b\right) \leq I_v(a,b) \leq 
\left(a\nabla_{\frac{v}{2}}b\right)\sharp_v \left(a\nabla_{\frac{1+v}{2}}b\right)\leq a \nabla_v b.
\end{equation}
\end{corollary}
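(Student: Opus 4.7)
The plan is to apply Theorem \ref{sec2_theorem01} to the convex function $f(t) := -\log t$ on the interval $[a,b]$ (assuming without loss of generality that $a<b$, the case $a>b$ follows by symmetry as in the proof of Corollary \ref{sec2_corollary01} and the case $a=b$ by continuity). Since $-\log$ is convex, Theorem \ref{sec2_theorem01} gives a five-term chain of inequalities; multiplying through by $-1$ reverses it and exponentiating yields an inequality between five positive quantities. The task is therefore to identify each of these quantities with the corresponding term in \eqref{sec2_eq09}.

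The outer terms are easy: $-\log(a\nabla_v b)$ exponentiates to $(a\nabla_v b)^{-1}$, and $(-\log a)\nabla_v(-\log b) = -\log(a\sharp_v b)$ exponentiates to $(a\sharp_v b)^{-1}$, so after reversing and inverting we recover $a\sharp_v b$ and $a\nabla_v b$ at the two ends. The two refinement terms $R^{(1)}_{-\log,v}$ and $R^{(2)}_{-\log,v}$ are immediate from their definitions: by \eqref{sec2_eq02},
\[
R^{(1)}_{-\log,v}(a,b) = -(1-v)\log(a\nabla_{v/2}b) - v\log(a\nabla_{(1+v)/2}b) = -\log\!\left[(a\nabla_{v/2}b)\sharp_v(a\nabla_{(1+v)/2}b)\right],
\]
and by \eqref{sec2_eq03},
\[
R^{(2)}_{-\log,v}(a,b) = \tfrac{1}{2}\bigl(-\log(a\sharp_v b)\bigr) + \tfrac{1}{2}\bigl(-\log(a\nabla_v b)\bigr) = -\log\!\left[(a\sharp_v b)\sharp(a\nabla_v b)\right].
\]

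The main step is therefore to verify that $C_{-\log,v}(a,b) = -\log I_v(a,b)$, equivalently $C_{\log,v}(a,b)=\log I_v(a,b)$. Using the substitutions already carried out in the proof of Theorem \ref{sec2_theorem01} (namely $t\mapsto v(b-a)s+a$ and $t\mapsto(1-v)(b-a)u+(1-v)a+vb$), the definition \eqref{sec1_eq04} rewrites as
\[
C_{\log,v}(a,b) = \frac{1-v}{v(b-a)}\int_a^{a\nabla_v b}\log u\,du + \frac{v}{(1-v)(b-a)}\int_{a\nabla_v b}^{b}\log u\,du.
\]
Evaluating via the antiderivative $u\log u - u$ and collecting terms, the coefficient of $\log(a\nabla_v b)$ becomes $\tfrac{(1-2v)(a\nabla_v b)}{v(1-v)(b-a)}$, the coefficients of $\log a$ and $\log b$ combine into $-\tfrac{(1-v)a}{v(b-a)}$ and $\tfrac{vb}{(1-v)(b-a)}$ respectively, and the constant terms collapse to $-1$. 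Comparing with \eqref{sec2_eq10} shows that the right-hand side is precisely $\log I_v(a,b)$.

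The only genuinely delicate step is this last bookkeeping of the constants and logarithmic coefficients, since the definition \eqref{sec2_eq10} of $I_v$ has a slightly elaborate form; once $C_{\log,v}(a,b) = \log I_v(a,b)$ is in hand, the corollary follows from Theorem \ref{sec2_theorem01} applied to $-\log$, after negating and exponentiating the resulting five-term inequality.
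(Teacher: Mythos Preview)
Your proposal is correct and follows essentially the same approach as the paper: apply Theorem~\ref{sec2_theorem01} with $f(t)=-\log t$, identify the outer and refinement terms with the corresponding means, and compute the integral term $C_{-\log,v}(a,b)$ to match $-\log I_v(a,b)$ via the antiderivative $u\log u - u$. Your bookkeeping of the logarithmic coefficients and the constant $-1$ is accurate and agrees with the paper's computation.
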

\begin{proof}
Applying the convex function $f(t):=-\log t$, $(t>0)$ in Theorem \ref{sec2_theorem01}, we have for $b \geq a >0$ with elementary calculations 
\begin{eqnarray*}
&&\log a^{1-v} b^v \leq \log \left(a^{\frac{1-v}{2}}b^{\frac{v}{2}} \left((1-v)a+vb\right)^{\frac{1}{2}}\right) \\
&& \leq \frac{1-v}{v(b-a)}\left\{ \left((1-v)a+vb\right)\log \left((1-v)a+vb\right)-\left((1-v)a+vb\right)-a\log a+a  \right\}\\
&&+ \frac{v}{(1-v)(b-a)}\left\{ b\log b - b-\left((1-v)a+vb\right)\log \left((1-v)a+vb\right) +\left((1-v)a+vb\right)\right\}\\
&&\leq \log\left(\left(1-\frac{v}{2}\right)a+\frac{v}{2}b \right)^{1-v}\left(\left(1-\frac{1+v}{2}\right)a+\frac{1+v}{2}b\right)^{v} \leq \log\left((1-v)a+vb\right).
\end{eqnarray*}
We calculate the following.
\begin{eqnarray*}
&&\frac{1-v}{v(b-a)}\left\{ \left((1-v)a+vb\right)\log \left((1-v)a+vb\right)-\left((1-v)a+vb\right)-a\log a+a  \right\}\\
&&+ \frac{v}{(1-v)(b-a)}\left\{ b\log b - b-\left((1-v)a+vb\right)\log \left((1-v)a+vb\right) +\left((1-v)a+vb\right)\right\}\\
&&=\log \left\{(1-v)a+vb\right\}^{\frac{(1-2v)\left\{(1-v)a+vb\right\} }{v(1-v)(b-a)}}b^{\frac{vb}{(1-v)(b-a)}}a^{-\frac{(1-v)a}{v(b-a)}}-1\\
&&= \log \frac{1}{e} \left\{(1-v)a+vb\right\}^{\frac{(1-2v)\left\{(1-v)a+vb\right\} }{v(1-v)(b-a)}}\left(\frac{b^{\frac{vb}{1-v}}}{a^{\frac{(1-v)a}{v}}}\right)^{\frac{1}{b-a}}.
\end{eqnarray*}
Thus we complete the proof for any $a,b >0$ in a similar way to the proof of Corollary \ref{sec2_corollary01}.
\end{proof}

Our Corollary \ref{sec2_corollary02} clearly refines \cite[Theorem 3.1]{PSMA2016}.

According to the inequalities shown in \cite[Theorem 3.3]{Mit2010} for a convex function $f$,
\begin{equation}\label{sec2_eq11}
 2v_{\min}\cdot \Delta_{f,1/2}(a,b)  
 \leq \Delta_{f,v}(a,b)
 \leq 2 v_{\max}\cdot \Delta_{f,1/2}(a,b)
 \end{equation}
where $v_{\min}:=\min\left\{1-v,v\right\}$, $v_{\max}:=\max\left\{1-v,v\right\}$ and $v\in [0,1]$
\begin{equation}\label{mit_ineq}
\Delta_{f,v}(a,b):= f(a)\nabla_vf(b)-f\left(a\nabla_vb\right)\geq 0,
 \end{equation}
we obtain the further refinements of Theorem \ref{sec2_theorem01}.

\begin{proposition}
Under the same assumption as in Theorem \ref{sec2_theorem01}, we have
\begin{equation}\label{sec2_eq17}
f\left(a\nabla_vb\right) \leq Q^{(1)}_{f,v} (a,b) \leq R^{(1)}_{f,v} (a,b) \leq C_{f,v}(a,b) \leq R^{(2)}_{f,v} (a,b) \leq Q^{(2)}_{f,v} (a,b)\leq f(a)\nabla_v f(b),
\end{equation}
where
$$
Q^{(1)}_{f,v} (a,b) := f(a\nabla_vb)+2v_{\min}\cdot \Delta_{f,1/2}\left(a\nabla_{\frac{v}{2}}b,a\nabla_{\frac{1+v}{2}}b\right)
$$
and
$$
Q^{(2)}_{f,v} (a,b) :=  f(a)\nabla_vf(b)-v_{\min}\cdot \Delta_{f,1/2}\left(a,b\right).
$$
\end{proposition}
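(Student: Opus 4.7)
The plan is to split the six-term chain into four separate inequalities, since the three middle ones, $R^{(1)}_{f,v}(a,b) \leq C_{f,v}(a,b) \leq R^{(2)}_{f,v}(a,b)$, are already supplied by Theorem \ref{sec2_theorem01}. The two outermost ones, $f(a\nabla_v b) \leq Q^{(1)}_{f,v}(a,b)$ and $Q^{(2)}_{f,v}(a,b) \leq f(a)\nabla_v f(b)$, are immediate from the definitions of $Q^{(1)}_{f,v}$ and $Q^{(2)}_{f,v}$ together with the nonnegativity $\Delta_{f,1/2}(\cdot,\cdot) \geq 0$ coming from convexity of $f$. Hence the real work is confined to $Q^{(1)}_{f,v} \leq R^{(1)}_{f,v}$ and $R^{(2)}_{f,v} \leq Q^{(2)}_{f,v}$, and I expect both to follow from a single application of the left inequality in (\ref{sec2_eq11}) to a carefully chosen pair of points.

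For $R^{(2)}_{f,v} \leq Q^{(2)}_{f,v}$, I would first unfold the outer arithmetic mean as
$$R^{(2)}_{f,v}(a,b) = \frac{1}{2}\bigl(f(a)\nabla_v f(b)\bigr) + \frac{1}{2} f(a\nabla_v b) = f(a)\nabla_v f(b) - \frac{1}{2}\Delta_{f,v}(a,b),$$
so that the desired bound collapses to $\Delta_{f,v}(a,b) \geq 2 v_{\min}\Delta_{f,1/2}(a,b)$, which is precisely the left half of (\ref{sec2_eq11}) applied to the pair $(a,b)$ with weight $v$.

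For $Q^{(1)}_{f,v} \leq R^{(1)}_{f,v}$, the central observation is the midpoint-splitting identity
$$\bigl(a\nabla_{v/2}b\bigr)\nabla_v\bigl(a\nabla_{(1+v)/2}b\bigr) = a\nabla_v b,$$
which I would verify by comparing coefficients of $a$ and $b$ on both sides. Setting $x := a\nabla_{v/2}b$ and $y := a\nabla_{(1+v)/2}b$, this identity turns the difference $R^{(1)}_{f,v}(a,b) - f(a\nabla_v b)$ into exactly $\Delta_{f,v}(x,y)$, so the left inequality of (\ref{sec2_eq11}) applied to the pair $(x,y)$ with weight $v$ delivers the required lower bound $2v_{\min}\Delta_{f,1/2}(x,y)$. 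No step poses any serious difficulty; the only point that requires attention is keeping straight which pair of points and which weight one is invoking (\ref{sec2_eq11}) with in each of the two cases, and checking the midpoint-splitting identity above so that the invocation is legitimate.
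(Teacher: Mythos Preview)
Your proposal is correct and follows essentially the same route as the paper: both arguments obtain $Q^{(1)}_{f,v}\leq R^{(1)}_{f,v}$ by applying the left inequality of (\ref{sec2_eq11}) to the pair $\bigl(a\nabla_{v/2}b,\,a\nabla_{(1+v)/2}b\bigr)$ via the identity $\bigl(a\nabla_{v/2}b\bigr)\nabla_v\bigl(a\nabla_{(1+v)/2}b\bigr)=a\nabla_v b$, and obtain $R^{(2)}_{f,v}\leq Q^{(2)}_{f,v}$ by rewriting $R^{(2)}_{f,v}=f(a)\nabla_v f(b)-\tfrac12\Delta_{f,v}(a,b)$ and applying (\ref{sec2_eq11}) to $(a,b)$. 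Your treatment of the remaining inequalities (middle three from Theorem~\ref{sec2_theorem01}, outer two from $\Delta_{f,1/2}\ge 0$) is also exactly what the paper implicitly relies on.
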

\begin{proof}
Using the first inequality from relation (\ref{sec2_eq11}) and replacing $a$ and $b$ by $a\nabla_{\frac{v}{2}}b$ and $a\nabla_{\frac{1+v}{2}}b$ respectively, we deduce
\begin{equation*}
 2v_{\min}\cdot \Delta_{f,1/2}(a\nabla_{\frac{v}{2}}b, a\nabla_{\frac{1+v}{2}}b)  
 \leq \Delta_{f,v}(a\nabla_{\frac{v}{2}}b, a\nabla_{\frac{1+v}{2}}b)
 \end{equation*}
\begin{equation*}
 =R^{(1)}_{f,v} (a,b)-f\left((a\nabla_{\frac{v}{2}}b)\nabla_{\frac{v}{2}} (a\nabla_{\frac{1+v}{2}}b)\right)=R^{(1)}_{f,v} (a,b)-f\left(a\nabla_{v}b\right).
 \end{equation*}
Using the first inequality in \eqref{sec2_eq11} again, we have
\begin{eqnarray*}
&& R^{(2)}_{f,v} (a,b) = \left(f(a)\nabla_v f(b)\right)\nabla\left( f(a \nabla_v b) \right)=
\frac{1}{2}\left\{f(a)\nabla_vf(b)+f\left(a\nabla_vb\right)\right\}\leq \\
&& f(a)\nabla_vf(b)-v_{\min}\cdot \Delta_{f,1/2}\left(a,b\right)=Q^{(2)}_{f,v} (a,b)\leq f(a)\nabla_vf(b).
\end{eqnarray*}
\end{proof}

\begin{remark}
\begin{itemize}
\item[(i)] From the inequality $Q_{f,v}^{(2)}(a,b) \geq Q_{f,v}^{(1)}(a,b)$ in \eqref{sec2_eq17}, we find that
$$
\Delta_{f,v}(a,b) \geq v_{\min} \left(\Delta_{f,1/2}(a,b)+2\Delta_{f,1/2}\left(a\nabla_{\frac{v}{2}}b,a\nabla_{\frac{1+v}{2}}b\right)\right)\geq 0,
$$
which gives a refinement of \eqref{mit_ineq}.
\item[(ii)] From the second inequality of \eqref{sec2_eq11}, we also find that
$$R_{f,v}^{(1)}(a,b) \leq P_{f,v}^{(1)}(a,b),\quad P_{f,v}^{(2)}(a,b)\leq R_{f,v}^{(2)}(a,b)$$
where
$$
P_{f,v}^{(1)}(a,b):=f(a\nabla_v b)+2v_{\max}\cdot \Delta_{f,1/2}\left(a\nabla_{\frac{v}{2}}b,a\nabla_{\frac{1+v}{2}}b\right)
$$
and
$$
P_{f,v}^{(2)}(a,b) := f(a)\nabla_vf(b)-v_{\max}\cdot \Delta_{f,1/2}\left(a,b\right).
$$
However, there is no ordering between $ P_{f,v}^{(1)}(a,b)$ and  $P_{f,v}^{(2)}(a,b)$, since we have the following numerical examples.
$$ P_{\exp,1/4}^{(1)}(4,1)-P_{\exp,1/4}^{(2)}(4,1) \simeq 4.35403,\,\,
P_{\exp,1/4}^{(1)}(8,1)-P_{\exp,1/4}^{(2)}(8,1) \simeq -30.7996.
$$

\end{itemize}
\end{remark}

\section{Reverses and refinements by differentiable functions}
We extend the above results for differentiable functions.
From \cite{CeDra}, if $f:I\rightarrow \mathbb{R}$ is a differentiable function on $I^o$ (interior of $I$) and if $f'\in L[a,b]$(the space of Riemann integrable function on $[a,b]$), where $a,b\in I$ with $a < b$,  then the following equality holds for each $x\in [a,b]$:
\begin{equation}\label{sec2_eq18}
   f(x)-\frac{1}{b-a}\int_a^bf(t)dt =\frac{(x-a)^2}{b-a}\int_0^1vf'((1-v)a+vx)dv-\frac{(b-x)^2}{b-a}\int_0^1vf'((1-v)b+vx)dv.
\end{equation}
If  we choose $x=\dfrac{a+b}{2}$ in  \eqref{sec2_eq18}, then we have
\begin{eqnarray}\label{sec2_eq19}
   &&f\left(\frac{a+b}{2}\right)-\frac{1}{b-a}\int_a^bf(t)dt \nonumber \\
   &&=\frac{b-a}{4}\left\{\int_0^1vf'\left((1-v)a+v\frac{a+b}{2}\right)dv-\int_0^1vf'\left((1-v)b+v\frac{a+b}{2}\right)dv\right\}.
   \end{eqnarray}
 In \cite{DraAg} we found the relation
\begin{equation}\label{sec2_eq20}
   \frac{f(a)+f(b)}{2}-\frac{1}{b-a}\int_a^bf(t)dt =\frac{b-a}{2}\int_0^1(1-2v)f'(va+(1-v)b)dv.
   \end{equation}
Here, we have the equality:
\begin{equation*}
\int_0^1(1-2v)f'(va+(1-v)b)dv=\int_0^1(2v-1)f'((1-v)a+vb)dv=\frac{2}{(b-a)^2}\int_a^b\left(t-\frac{a+b}{2}\right)f'(t)dt.
   \end{equation*}
Thus we have the following equality from \eqref{sec2_eq20} with the equality
\begin{equation}\label{sec2_eq21}
\frac{f(a)+f(b)}{2}-\frac{1}{b-a}\int_a^bf(t)dt =\frac{1}{b-a}\int_a^b\left(t-\frac{a+b}{2}\right)f'(t)dt.
\end{equation}

\begin{theorem}\label{sec3_theorem02}
For every convex differentiable function $f:[a,b]\to \mathbb{R}$ with $f'\in L[a,b]$ and $|f'(x)|\leq K$, we have
\begin{equation}\label{sec3_theorem02_eq01}
 C_{f,v}(a,b)-R^{(1)}_{f,v} (a,b) \leq \frac{v(1-v)K(b-a)}{2}
\end{equation}
and
\begin{equation}\label{sec3_theorem02_eq02}
R^{(2)}_{f,v} (a,b)- C_{f,v}(a,b) \leq \frac{v(1-v)K(b-a)}{2}.
\end{equation}
\end{theorem}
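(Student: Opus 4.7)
My plan is to leverage the decomposition of $C_{f,v}(a,b)$ that already appeared in the proof of Theorem~\ref{sec2_theorem01},
\[
C_{f,v}(a,b) = \frac{1-v}{v(b-a)}\int_a^{a\nabla_v b} f(t)\,dt + \frac{v}{(1-v)(b-a)}\int_{a\nabla_v b}^{b} f(t)\,dt,
\]
and to rewrite $R^{(1)}_{f,v}(a,b)$ and $R^{(2)}_{f,v}(a,b)$ so that the two desired differences split cleanly over the subintervals $[a, a\nabla_v b]$ (length $v(b-a)$, midpoint $a\nabla_{v/2}b$) and $[a\nabla_v b, b]$ (length $(1-v)(b-a)$, midpoint $a\nabla_{(1+v)/2}b$). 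For $R^{(1)}$ this is immediate from the definition \eqref{sec2_eq02}. For $R^{(2)}$ I first observe the elementary identity
\[
R^{(2)}_{f,v}(a,b) = (1-v)\cdot\frac{f(a)+f(a\nabla_v b)}{2} + v\cdot\frac{f(a\nabla_v b)+f(b)}{2},
\]
which is a direct expansion of \eqref{sec2_eq03}. Once this is in place, each of the two differences $C_{f,v}-R^{(1)}_{f,v}$ and $R^{(2)}_{f,v}-C_{f,v}$ becomes a convex combination of two quantities of a familiar type (midpoint vs.\ integral average, respectively trapezoid vs.\ integral average) on the two subintervals.

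\textbf{Applying the representation identities.} On each subinterval $[\alpha,\beta]$, the midpoint-vs-average discrepancy is controlled by \eqref{sec2_eq19} adapted to $[\alpha,\beta]$, giving
\[
\frac{1}{\beta-\alpha}\int_\alpha^\beta f(t)\,dt - f\!\left(\frac{\alpha+\beta}{2}\right) = \frac{\beta-\alpha}{4}\int_0^1 s\bigl[f'((1-s)\beta + s\tfrac{\alpha+\beta}{2}) - f'((1-s)\alpha + s\tfrac{\alpha+\beta}{2})\bigr]ds,
\]
and using $|f'|\leq K$ together with $\int_0^1 s\,ds = 1/2$ yields the bound $\frac{K(\beta-\alpha)}{4}$. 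Similarly, the trapezoid-vs-average discrepancy is handled by \eqref{sec2_eq21}:
\[
\frac{f(\alpha)+f(\beta)}{2} - \frac{1}{\beta-\alpha}\int_\alpha^\beta f(t)\,dt = \frac{1}{\beta-\alpha}\int_\alpha^\beta \!\Bigl(t-\tfrac{\alpha+\beta}{2}\Bigr)f'(t)\,dt,
\]
whose absolute value is at most $\frac{K(\beta-\alpha)}{4}$ by a straightforward computation of $\int_\alpha^\beta |t-(\alpha+\beta)/2|\,dt = (\beta-\alpha)^2/4$.

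\textbf{Assembling the estimates.} Plugging these two bounds into the corresponding decompositions, for $C_{f,v}-R^{(1)}_{f,v}$ the $[a,a\nabla_v b]$ piece contributes $(1-v)\cdot\frac{Kv(b-a)}{4}$ and the $[a\nabla_v b,b]$ piece contributes $v\cdot\frac{K(1-v)(b-a)}{4}$, summing to $\frac{v(1-v)K(b-a)}{2}$. The same weighting gives the identical sum for $R^{(2)}_{f,v}-C_{f,v}$, yielding both \eqref{sec3_theorem02_eq01} and \eqref{sec3_theorem02_eq02}.

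\textbf{Expected obstacle.} The main (and really only) subtlety is the bookkeeping: confirming the decomposition of $R^{(2)}_{f,v}$ as a convex combination of trapezoidal expressions on the two subintervals, and verifying that the weights $(1-v)$ and $v$ combine with the subinterval lengths $v(b-a)$ and $(1-v)(b-a)$ to produce the symmetric factor $v(1-v)$ on both sides. Once this symmetry is seen, the analytic content reduces to the standard Ces{a}ro--Dragomir and Dragomir--Agarwal identities recalled just above the theorem.
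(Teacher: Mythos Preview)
Your proof is correct and follows essentially the same approach as the paper: both derive the generic bounds $\frac{1}{\beta-\alpha}\int_\alpha^\beta f - f(\tfrac{\alpha+\beta}{2}) \le \frac{K(\beta-\alpha)}{4}$ and $\frac{f(\alpha)+f(\beta)}{2} - \frac{1}{\beta-\alpha}\int_\alpha^\beta f \le \frac{K(\beta-\alpha)}{4}$ from the identities \eqref{sec2_eq19} and \eqref{sec2_eq21}, apply them on the subintervals $[a,a\nabla_v b]$ and $[a\nabla_v b,b]$, and take the $(1-v,v)$-weighted sum. Your explicit rewriting of $R^{(2)}_{f,v}$ as $(1-v)\tfrac{f(a)+f(a\nabla_v b)}{2}+v\tfrac{f(a\nabla_v b)+f(b)}{2}$ just makes transparent what the paper leaves implicit in the phrase ``in the same way with \eqref{sec2_eq23}.''
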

\begin{proof} If $|f'(x)|\leq K$, then from \eqref{sec2_eq19} we deduce
   \begin{equation}\label{sec2_eq22}
   \frac{1}{b-a}\int_a^bf(t)dt-f\left(\frac{a+b}{2}\right)\leq\frac{K(b-a)}{4}
   \end{equation}
  and from \eqref{sec2_eq21} we obtain
  \begin{equation}\label{sec2_eq23}
\frac{f(a)+f(b)}{2}-\frac{1}{b-a}\int_a^bf(t)dt \leq\frac{K}{b-a}\int_a^b\left|t-\frac{a+b}{2}\right|dt=\frac{K(b-a)}{4}.
\end{equation}
We obtain \eqref{sec3_theorem02_eq01} by applying  the inequalities \eqref{sec2_eq22}  on the two intervals
$[a,(1-v)a+vb]$ and $[(1-v)a+vb,b]$, and then multiplying them by $(1-v)$ and $v$ and summing them.
In the same way with \eqref{sec2_eq23}, we obtain \eqref{sec3_theorem02_eq02}.
\end{proof}
 \begin{corollary}\label{sec3_corollary01}
For $b\geq a>0$ and $v\in (0,1)$, we have
\begin{equation}\label{sec2_eq24}
L_v(a,b)\leq  \left(a\sharp_{\frac{v}{2}} b\right)\nabla_v \left(a \sharp_{\frac{1+v}{2}}b\right)+\frac{v(1-v)b}{2}\log \frac{b}{a}
\end{equation}
and
\begin{equation}\label{sec2_eq25}
\left(a\nabla_v b\right) \nabla\left(a\sharp_vb\right)\leq L_v(a,b)+ \frac{v(1-v)b}{2}\log \frac{b}{a}.
\end{equation}
\end{corollary}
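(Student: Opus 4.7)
The plan is to specialize Theorem \ref{sec3_theorem02} to the convex differentiable function $f(t)=e^t$, using essentially the same substitution $e^{\alpha}\mapsto a$, $e^{\beta}\mapsto b$ that was used in the proof of Corollary \ref{sec2_corollary01}. This deduces both inequalities \eqref{sec2_eq24} and \eqref{sec2_eq25} simultaneously from the two estimates \eqref{sec3_theorem02_eq01} and \eqref{sec3_theorem02_eq02}.

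First I would apply Theorem \ref{sec3_theorem02} to $f(t)=e^t$ on the interval $[\alpha,\beta]:=[\log a,\log b]$, which is admissible precisely because the hypothesis $b\geq a>0$ gives $\alpha\leq\beta$. On this interval one has $|f'(t)|=e^t\leq e^{\beta}=b$, so we may take $K=b$; the interval length is $\beta-\alpha=\log(b/a)$. Consequently, the right-hand sides in \eqref{sec3_theorem02_eq01} and \eqref{sec3_theorem02_eq02} both take the common value $\dfrac{v(1-v)\,b\,\log(b/a)}{2}$, which is exactly the error term appearing in the statement.

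Second, I would identify the three quantities $R^{(1)}_{\exp,v}(\alpha,\beta)$, $C_{\exp,v}(\alpha,\beta)$, and $R^{(2)}_{\exp,v}(\alpha,\beta)$ with weighted means of $a$ and $b$. These identifications are already carried out in the proof of Corollary \ref{sec2_corollary01}; reading them off gives
$$R^{(1)}_{\exp,v}(\alpha,\beta)=(a\sharp_{v/2}b)\nabla_v(a\sharp_{(1+v)/2}b),$$
$$C_{\exp,v}(\alpha,\beta)=L_v(a,b),$$
$$R^{(2)}_{\exp,v}(\alpha,\beta)=(a\nabla_v b)\nabla(a\sharp_v b).$$
Plugging these identifications together with $K=b$ into \eqref{sec3_theorem02_eq01} and \eqref{sec3_theorem02_eq02} yields \eqref{sec2_eq24} and \eqref{sec2_eq25}, respectively.

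There is no substantial obstacle here: the proof is essentially a dictionary translation from Theorem \ref{sec3_theorem02} via the convex function $e^t$, and the bookkeeping has already been done in the proof of Corollary \ref{sec2_corollary01}. The only point requiring genuine attention is the asymmetry of the bound $K=b$ (rather than $K=\max\{a,b\}$), which explains why the corollary is stated for $b\geq a$ only: unlike in Corollary \ref{sec2_corollary01}, where one could swap the roles of $a$ and $b$ to extend the inequalities to all $a,b>0$, here swapping would change $K$ and therefore change the error term, so a separate companion statement for $a\geq b$ (with $b$ replaced by $a$ in the error term) would be needed if one wanted the full range.
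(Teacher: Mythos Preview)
Your proposal is correct and follows essentially the same approach as the paper: apply Theorem \ref{sec3_theorem02} with $f(t)=e^t$, take $K=e^\beta=b$, and translate the resulting quantities into weighted means via the substitution $e^\alpha\mapsto a$, $e^\beta\mapsto b$. Your additional remark explaining why the hypothesis $b\geq a$ is genuinely needed here (unlike in Corollary \ref{sec2_corollary01}) is a helpful clarification that the paper does not spell out.
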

\begin{proof}
Applying the convex function $f(t):=e^t$ in Theorem \ref{sec3_theorem02}, we have the relations of the statement, since
we have
\begin{eqnarray*}
&&C_{\exp,v}(a,b)=L_v(e^a,e^b),\\
&&R_{\exp,v}^{(1)}(a,b)=\left(e^a\sharp_{\frac{v}{2}}e^b\right)\nabla_v\left(e^a\sharp_{\frac{1+v}{2}}e^b\right),\\
&&R_{\exp,v}^{(2)}(a,b)= \left(e^a\nabla_ve^b\right)\nabla\left(e^a\sharp_ve^b\right)
\end{eqnarray*}
and we can take $K=e^b$ for $t \in [a,b]$. Finally we replace $e^a$ and $e^b$ by $a$ and $b$, respectively. 
\end{proof}

The inequalities \eqref{sec2_eq24} and \eqref{sec2_eq25} give (difference type) reverses for the 2nd and 3rd inequalities in \eqref{sec2_eq08}, respectively.

 \begin{corollary}\label{sec3_corollary02}
For $b\geq a>0$ and $v\in (0,1)$, we have
\begin{equation}\label{sec2_eq26}
\left(a\nabla_{\frac{v}{2}} b\right)\sharp_v \left(a \nabla_{\frac{1+v}{2}}b\right) \leq
e^\frac{v(1-v)(b-a)}{2a} I_v(a,b) 
\end{equation}
and
\begin{equation}\label{sec2_eq27}
I_v(a,b) \leq 
e^\frac{v(1-v)(b-a)}{2a} \left(a\sharp_v b\right) \sharp\left(a\nabla_vb\right).
\end{equation}
\end{corollary}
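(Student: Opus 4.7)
The plan is to mimic the strategy of Corollary \ref{sec3_corollary01}, but to apply Theorem \ref{sec3_theorem02} to the convex differentiable function $f(t):=-\log t$ on $[a,b]$ (where $0<a\le b$), exactly as the proof of Corollary \ref{sec2_corollary02} applied Theorem \ref{sec2_theorem01} to the same $f$. Since $f'(t)=-1/t$, we have $|f'(t)|\le 1/a$ for all $t\in[a,b]$, so we may take $K=1/a$ in Theorem \ref{sec3_theorem02}.

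Next I would record the three quantities $C_{f,v}(a,b)$, $R^{(1)}_{f,v}(a,b)$, $R^{(2)}_{f,v}(a,b)$ for this choice of $f$. Directly from the definitions \eqref{sec2_eq02} and \eqref{sec2_eq03},
\begin{equation*}
R^{(1)}_{-\log,v}(a,b) = -\log\bigl((a\nabla_{\frac{v}{2}}b)\sharp_v(a\nabla_{\frac{1+v}{2}}b)\bigr),
\qquad
R^{(2)}_{-\log,v}(a,b) = -\log\bigl((a\sharp_v b)\sharp(a\nabla_v b)\bigr).
\end{equation*}
For $C_{-\log,v}(a,b)$ I would invoke the elementary computation already carried out in the proof of Corollary \ref{sec2_corollary02}, which shows precisely that
\begin{equation*}
C_{-\log,v}(a,b) = -\log I_v(a,b),
\end{equation*}
with $I_v(a,b)$ as defined in \eqref{sec2_eq10}. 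No new calculation is needed; I would simply point to that derivation.

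Substituting these three identifications together with $K=1/a$ into \eqref{sec3_theorem02_eq01} and \eqref{sec3_theorem02_eq02} yields
\begin{equation*}
\log\frac{(a\nabla_{\frac{v}{2}}b)\sharp_v(a\nabla_{\frac{1+v}{2}}b)}{I_v(a,b)} \le \frac{v(1-v)(b-a)}{2a},
\qquad
\log\frac{I_v(a,b)}{(a\sharp_v b)\sharp(a\nabla_v b)} \le \frac{v(1-v)(b-a)}{2a},
\end{equation*}
and exponentiating both inequalities gives exactly \eqref{sec2_eq26} and \eqref{sec2_eq27}.

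There is no real obstacle here; the only thing to be careful about is the sign bookkeeping that comes from working with the convex function $-\log$ (inequalities between $f$-values translate into reversed inequalities between the corresponding means, and the difference $C_{f,v}-R^{(1)}_{f,v}$ becomes $\log\bigl(R^{(1)}\text{-mean}/I_v\bigr)$ after flipping signs). The bound $K=1/a$ rather than a symmetric bound is also the reason the reverse factor $\exp\bigl(v(1-v)(b-a)/(2a)\bigr)$ is not symmetric in $a,b$, which matches the statement.
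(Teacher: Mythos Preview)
Your proof is correct and essentially identical to the paper's own proof: the paper also applies Theorem \ref{sec3_theorem02} to $f(t)=-\log t$ with $K=1/a$, records the same identifications $C_{-\log,v}(a,b)=-\log I_v(a,b)$, $R^{(1)}_{-\log,v}(a,b)=-\log\bigl((a\nabla_{\frac{v}{2}}b)\sharp_v(a\nabla_{\frac{1+v}{2}}b)\bigr)$, $R^{(2)}_{-\log,v}(a,b)=-\log\bigl((a\sharp_v b)\sharp(a\nabla_v b)\bigr)$, and exponentiates. Your explicit sign bookkeeping is a nice addition but not a departure from the paper's argument.
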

\begin{proof}
Applying the convex function $f(t):=-\log t,\,\,(t>0)$ in Theorem \ref{sec3_theorem02}, we have the relations of the statement, since
we have
\begin{eqnarray*}
&&C_{-\log,v}(a,b)=-\log I_v(a,b),\\
&&R_{-\log,v}^{(1)}(a,b)= -\log \left(a\nabla_{\frac{v}{2}}b\right)\sharp_v\left(a\nabla_{\frac{1+v}{2}}b\right),\\
&&R_{-\log,v}^{(2)}(a,b)= -\log \left(a\sharp_v b\right)\sharp\left(a\nabla_v b\right)
\end{eqnarray*}
and we can take $K=\frac{1}{a}$ for $t \in [a,b]$.
\end{proof}
The inequalities \eqref{sec2_eq26} and \eqref{sec2_eq27} give (ratio type) reverses for the 3rd and 2nd inequalities in \eqref{sec2_eq09}, respectively.

   We extend the above results for the twice differentiable functions.
From \cite{DraCeSo1},\cite{DraCeSo2} and \cite{FaLaBa}, assume that  $f:I\rightarrow \mathbb{R}$ is a continuous on $I$, twice differentiable on $I^o$ and there exist $m=\underset{x\in I^o}{\inf}f"(x)$ and $M=\underset{x\in I^o}{\sup}f"(x)$, $a,b\in I$ with $a < b$, then the following inequalities hold:
\begin{equation}\label{sec2_eq28}
   \frac{m}{3}\left(\frac{b-a}{2}\right)^2\leq\frac{f(a)+f(b)}{2}-\frac{1}{b-a}\int_a^bf(t)dt\leq \frac{M}{3}\left(\frac{b-a}{2}\right)^2
   \end{equation}
 and
 \begin{equation}\label{sec2_eq29}
   \frac{m}{6}\left(\frac{b-a}{2}\right)^2\leq \frac{1}{b-a}\int_a^bf(t)dt-f\left(\frac{a+b}{2}\right)\leq \frac{M}{6}\left(\frac{b-a}{2}\right)^2.
   \end{equation}
\begin{theorem}\label{sec3_theorem03}
Assume that  $f:I\rightarrow \mathbb{R}$ is a continuous on $I$, twice differentiable on $I^o$ and there exist $m=\underset{x\in I^o}{\inf}f"(x)$ and $M=\underset{x\in I^o}{\sup}f"(x)$, $a,b\in I$ with $a < b$, we have
\begin{equation}\label{sec2_eq30}
 \frac{v(1-v)m}{6}\left(\frac{b-a}{2}\right)^2  \leq C_{f,v}(a,b)-R_{f,v}^{(1)}(a,b) \leq \frac{v(1-v)M}{6}\left(\frac{b-a}{2}\right)^2
\end{equation}
and
\begin{equation}\label{sec2_eq31}
 \frac{v(1-v)m}{3}\left(\frac{b-a}{2}\right)^2  \leq R_{f,v}^{(2)}(a,b)- C_{f,v}(a,b)\leq \frac{v(1-v)M}{3}\left(\frac{b-a}{2}\right)^2.
\end{equation}
\end{theorem}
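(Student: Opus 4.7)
The plan is to mirror the structure of the proof of Theorem \ref{sec3_theorem02}, but use the sharper two-sided estimates \eqref{sec2_eq28} and \eqref{sec2_eq29} in place of the one-sided bounds \eqref{sec2_eq22} and \eqref{sec2_eq23}. Specifically, I would split the interval $[a,b]$ at the point $(1-v)a+vb$ into subintervals of lengths $v(b-a)$ and $(1-v)(b-a)$, and apply the midpoint-type inequality \eqref{sec2_eq29} (respectively the trapezoid-type inequality \eqref{sec2_eq28}) separately on each piece; then combine with the weights $(1-v)$ and $v$ exactly as was done in the proof of Theorem \ref{sec2_theorem01}.

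For \eqref{sec2_eq30}, applying \eqref{sec2_eq29} on $[a,(1-v)a+vb]$ gives a bound of the form $\tfrac{m}{6}\bigl(\tfrac{v(b-a)}{2}\bigr)^2$ from below and $\tfrac{M}{6}\bigl(\tfrac{v(b-a)}{2}\bigr)^2$ from above, sandwiching the difference $\tfrac{1}{v(b-a)}\int_a^{(1-v)a+vb}f(t)\,dt - f\bigl(\tfrac{(2-v)a+vb}{2}\bigr)$. The analogous application on $[(1-v)a+vb,b]$ yields the corresponding quantity with $v$ replaced by $1-v$. Multiplying the first by $(1-v)$ and the second by $v$ and summing, the middle collapses to $C_{f,v}(a,b)-R_{f,v}^{(1)}(a,b)$ by the same rearrangement used in \eqref{sec2_eq06}. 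The bounds combine as
\begin{equation*}
\frac{m}{6}\cdot\frac{(b-a)^2}{4}\bigl[(1-v)v^2+v(1-v)^2\bigr]=\frac{m}{6}\cdot\frac{v(1-v)(b-a)^2}{4}=\frac{v(1-v)m}{6}\left(\frac{b-a}{2}\right)^2,
\end{equation*}
and similarly for $M$, giving exactly \eqref{sec2_eq30}.

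For \eqref{sec2_eq31} I would repeat the same procedure, now using \eqref{sec2_eq28} on each subinterval; the same weighted sum produces $R_{f,v}^{(2)}(a,b)-C_{f,v}(a,b)$ in the middle (since $(1-v)\tfrac{f(a)+f((1-v)a+vb)}{2}+v\tfrac{f((1-v)a+vb)+f(b)}{2}$ telescopes to $\tfrac{1}{2}\bigl(f(a)\nabla_v f(b)+f(a\nabla_v b)\bigr)=R_{f,v}^{(2)}(a,b)$), and the constant factor $\tfrac{1}{3}$ in \eqref{sec2_eq28} replaces the $\tfrac{1}{6}$ from before, doubling the bound.

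There is no real obstacle here beyond careful bookkeeping. The only subtle point worth flagging is verifying that the weighted combination of trapezoid sides indeed reassembles into $R_{f,v}^{(2)}(a,b)$ and not some other convex combination, but this is precisely the computation already recorded implicitly in \eqref{sec2_eq06}--\eqref{sec2_eq07}, so it can be cited rather than repeated. The rest is a straightforward algebraic simplification of $(1-v)v^2+v(1-v)^2=v(1-v)$.
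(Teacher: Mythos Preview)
Your proposal is correct and follows essentially the same route as the paper: split $[a,b]$ at $(1-v)a+vb$, apply \eqref{sec2_eq29} (resp.\ \eqref{sec2_eq28}) on each subinterval, take the $(1-v),v$--weighted sum, and simplify $(1-v)v^2+v(1-v)^2=v(1-v)$. If anything, your write-up is cleaner than the paper's, which contains a couple of minor typos (it cites \eqref{sec2_eq28} where \eqref{sec2_eq29} is meant, and the displayed integrals should be over the subintervals).
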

\begin{proof} Applying  the inequality (\ref{sec2_eq28}) on the two intervals
$[a,(1-v)a+vb]$ and $[(1-v)a+vb,b]$, we obtain
\begin{equation}\label{sec2_eq32}
   \frac{m}{6}\left(\frac{v(b-a)}{2}\right)^2\leq \frac{1}{v(b-a)}\int_a^bf(t)dt-f\left(a\nabla_{\frac{v}{2}}b\right)\leq \frac{M}{6}\left(\frac{v(b-a)}{2}\right)^2
   \end{equation}
   and
   \begin{equation}\label{sec2_eq33}
   \frac{m}{6}\left(\frac{(1-v)(b-a)}{2}\right)^2\leq \frac{1}{(1-v)(b-a)}\int_a^bf(t)dt-f\left(a\nabla_{\frac{1+v}{2}}b\right)\leq \frac{M}{6}\left(\frac{(1-v)(b-a)}{2}\right)^2.
   \end{equation}
  Multiplying both sides in (\ref{sec2_eq32}) and (\ref{sec2_eq33}) by $(1-v)$ and $v$ respectively and summing each side,  we obtain the relations of the statement. Similarly, applying  the inequality (\ref{sec2_eq29}), we deduce the inequality (\ref{sec2_eq33}).
 \end{proof} 

 \begin{corollary}\label{sec3_corollary03}
For $b\geq a>0$ and $v\in (0,1)$, we have
\begin{equation}\label{sec2_eq34}
\frac{v(1-v)a}{24}\log^2\frac{b}{a}\leq L_v(a,b)-\left(a\sharp_{\frac{v}{2}} b\right)\nabla_v \left(a \sharp_{\frac{1+v}{2}}b\right)\leq  \frac{v(1-v)b}{24}\log^2\frac{b}{a}
\end{equation}
and
\begin{equation}\label{sec2_eq35}
\frac{v(1-v)a}{12}\log^2\frac{b}{a}\leq\left(a\nabla_v b\right) \nabla\left(a\sharp_vb\right)-L_v(a,b)\leq \frac{v(1-v)b}{12}\log^2\frac{b}{a}.
\end{equation}
\end{corollary}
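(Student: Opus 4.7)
The plan is to specialize Theorem \ref{sec3_theorem03} to the convex function $f(t) := e^t$, following exactly the same template as the proof of Corollary \ref{sec3_corollary01}. The function $f$ is twice differentiable with $f''(t) = e^t$, and since the exponential is monotone, its inf and sup on any interval $[a,b]$ with $a < b$ are attained at the endpoints; hence we may take $m = e^a$ and $M = e^b$ in Theorem \ref{sec3_theorem03}.

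Using the evaluations already computed in the proof of Corollary \ref{sec3_corollary01}, namely
\begin{align*}
C_{\exp,v}(a,b) &= L_v(e^a,e^b), \\
R^{(1)}_{\exp,v}(a,b) &= \left(e^a \sharp_{\frac{v}{2}} e^b\right) \nabla_v \left(e^a \sharp_{\frac{1+v}{2}} e^b\right), \\
R^{(2)}_{\exp,v}(a,b) &= \left(e^a \nabla_v e^b\right) \nabla \left(e^a \sharp_v e^b\right),
\end{align*}
the chain \eqref{sec2_eq30} of Theorem \ref{sec3_theorem03} rewrites as
\[
\frac{v(1-v)e^a}{24}(b-a)^2 \leq L_v(e^a,e^b) - R^{(1)}_{\exp,v}(a,b) \leq \frac{v(1-v)e^b}{24}(b-a)^2,
\]
after absorbing the $\left(\frac{b-a}{2}\right)^2$ into the constants, so that $\frac{1}{6}\cdot\frac{1}{4} = \frac{1}{24}$. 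Likewise \eqref{sec2_eq31} produces the analogous chain with $R^{(2)}_{\exp,v}(a,b) - L_v(e^a,e^b)$ in the middle and the constant $\frac{1}{3}\cdot\frac{1}{4} = \frac{1}{12}$.

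Finally I would perform the substitution $e^a \mapsto a$ and $e^b \mapsto b$, under which $(b-a)^2$ becomes $\log^2(b/a)$ and $e^a, e^b$ become $a, b$ respectively; this immediately yields \eqref{sec2_eq34} and \eqref{sec2_eq35} verbatim. No serious obstacle is anticipated: the whole argument is a direct specialization, and the only point worth explicitly noting is the identification of the tight bounds $m = e^a$, $M = e^b$, which is immediate from monotonicity of the exponential. The edge case $a = b$ is trivial since both sides of \eqref{sec2_eq34} and \eqref{sec2_eq35} then vanish.
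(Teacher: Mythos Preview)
Your proposal is correct and follows exactly the paper's own approach: apply Theorem \ref{sec3_theorem03} with $f(t)=e^t$, take $m=e^a$ and $M=e^b$, and then replace $e^a,e^b$ by $a,b$. The paper's proof is even terser than yours but uses the identical specialization.
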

\begin{proof}
Applying the convex function $f(t):=e^t$ in Theorem \ref{sec3_theorem03}, we have the relations of the statement, since $m=e^a$ and $M=e^b$. Finally we replace $e^a$ and $e^b$ by $a$ and $b$, respectively. 
\end{proof}

The inequalities \eqref{sec2_eq34} and \eqref{sec2_eq35}  give a better (difference type) refinement for the 2nd and 3rd inequality in \eqref{sec2_eq08}, respectively.

 \begin{corollary}\label{sec3_corollary04}
For $b\geq a>0$ and $v\in (0,1)$, we have
\begin{equation}\label{sec2_eq36}
e^\frac{-v(1-v)(b-a)^2}{24a^2}\left(a\nabla_{\frac{v}{2}} b\right)\sharp_v \left(a \nabla_{\frac{1+v}{2}}b\right)\leq  I_v(a,b)\leq e^\frac{-v(1-v)(b-a)^2}{24b^2}\left(a\nabla_{\frac{v}{2}} b\right)\sharp_v \left(a \nabla_{\frac{1+v}{2}}b\right)
\end{equation}
and
\begin{equation}\label{sec2_eq37}
e^\frac{v(1-v)(b-a)^2}{12b^2}\left(a\sharp_v b\right) \sharp\left(a\nabla_vb\right)\leq  I_v(a,b)\leq e^\frac{v(1-v)(b-a)^2}{12a^2}\left(a\sharp_v b\right) \sharp\left(a\nabla_vb\right).
\end{equation}
\end{corollary}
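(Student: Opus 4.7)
My plan is to apply Theorem \ref{sec3_theorem03} with the convex function $f(t):=-\log t$ on $[a,b]$, exactly paralleling how Corollary \ref{sec3_corollary03} was derived from the same theorem via $f(t):=e^t$. The reason this works is that all three central quantities in Theorem \ref{sec3_theorem03} have already been identified explicitly for $f=-\log$ during the proof of Corollary \ref{sec2_corollary02}, namely
\[
C_{-\log,v}(a,b)=-\log I_v(a,b),\quad R^{(1)}_{-\log,v}(a,b)=-\log\bigl((a\nabla_{v/2}b)\sharp_v(a\nabla_{(1+v)/2}b)\bigr),
\]
\[
R^{(2)}_{-\log,v}(a,b)=-\log\bigl((a\sharp_vb)\sharp(a\nabla_vb)\bigr).
\]
So no new integral computations are required; only the two-sided bounds from Theorem \ref{sec3_theorem03} must be translated into multiplicative form.

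The first step is to compute $m$ and $M$ for $f''(t)=1/t^2$ on $[a,b]$ with $0<a\leq b$: since $1/t^2$ is decreasing, $m=1/b^2$ and $M=1/a^2$. Substituting into \eqref{sec2_eq30} gives
\[
\frac{v(1-v)(b-a)^2}{24\,b^2}\leq -\log I_v(a,b)+\log\bigl((a\nabla_{v/2}b)\sharp_v(a\nabla_{(1+v)/2}b)\bigr)\leq \frac{v(1-v)(b-a)^2}{24\,a^2},
\]
i.e.\ a two-sided bound on $\log\bigl((a\nabla_{v/2}b)\sharp_v(a\nabla_{(1+v)/2}b)/I_v(a,b)\bigr)$. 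Exponentiating and rearranging yields \eqref{sec2_eq36}. Analogously, substituting into \eqref{sec2_eq31} gives a two-sided bound on $\log\bigl(I_v(a,b)/((a\sharp_vb)\sharp(a\nabla_vb))\bigr)$, and exponentiating yields \eqref{sec2_eq37}.

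There is essentially no obstacle beyond sign bookkeeping: because $f=-\log$, the inner differences in Theorem \ref{sec3_theorem03} become $\log$'s of ratios with the sign flipped relative to the $f=\exp$ case, so one must take care that $m=1/b^2$ (not $1/a^2$) appears on the left of \eqref{sec2_eq36}--\eqref{sec2_eq37} and $M=1/a^2$ on the right, producing the asymmetric exponentials $e^{-v(1-v)(b-a)^2/(24a^2)}$ vs.\ $e^{-v(1-v)(b-a)^2/(24b^2)}$ in \eqref{sec2_eq36} and the corresponding pair in \eqref{sec2_eq37}. Once this is handled, the proof is a direct specialization of Theorem \ref{sec3_theorem03}.
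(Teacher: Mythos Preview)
Your proposal is correct and follows essentially the same approach as the paper: apply Theorem \ref{sec3_theorem03} with $f(t)=-\log t$, take $m=1/b^2$ and $M=1/a^2$, and use the identifications of $C_{-\log,v}$, $R^{(1)}_{-\log,v}$, $R^{(2)}_{-\log,v}$ already established in Corollary \ref{sec2_corollary02}. (Your final paragraph's bookkeeping remark is slightly misstated---after rearranging, $M=1/a^2$ lands on the left of \eqref{sec2_eq36} while $m=1/b^2$ lands on the left of \eqref{sec2_eq37}---but the actual computation you wrote out is correct.)
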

\begin{proof}
Applying the convex function $f(t):=-\log t,\,\,(t>0)$ in Theorem \ref{sec3_theorem03}, we have the relations of the statement, since 
$m=\frac{1}{b^2}$ and $M=\frac{1}{a^2}$.
\end{proof}
 
 The inequalities \eqref{sec2_eq36} and \eqref{sec2_eq37}  give a better (ratio type) refinement for the 3rd and 2nd inequality in \eqref{sec2_eq09}, respectively.

\section{Concluding remarks}
Our obtained results in this paper can be extended to the operator inequalities. We give operator inequalities corresponding to Corollary \ref{sec2_corollary01}. We omit the other cases.
For strictly positive operators $A$ and $B$,  the weighted geometric operator mean and arithmetic operator mean are defined as
$$
A\sharp_v B:=A^{1/2}\left(A^{-1/2}BA^{-1/2}\right)^vA^{1/2},\quad A\nabla_v B:=(1-v)A+v B.
$$
It is known that an operator mean $M(A,B)$ is associated with the representing function $f(t)=m(1,t)$ with a mean $m(a,b)$ for positive numbers $a,b$, in the following
$$
M(A,B)=A^{1/2}f\left(A^{-1/2}BA^{-1/2}\right)A^{1/2}
$$
in the general operator mean theory by Kubo-Ando \cite{KA1980}.
Thus it is understood that the weighted logarithmic operator mean $A \ell_v B$ is defined  through the representing function $L_v(1,t)$ for $v\in (0,1)$. 

From Corollary \ref{sec2_corollary01} and Kubo-Ando theory (or standard functional calculus), we can obtain the following operator inequalities. However, we state an alternative proof for the scalar inequalities on the representing functions.

\begin{theorem}\label{sec4_theorem4.1}
For any $v\in (0,1)$ and strictly positive operators $A$ and $B$, we have
$$
A\sharp_vB \leq  (1-v)A\sharp_{\frac{v}{2}}B+vA\sharp_{\frac{1+v}{2}}B\leq A\ell_vB \leq \frac{1}{2}\left(A\sharp_{v}B+A\nabla_{v}B\right)\leq  A\nabla_{v}B.
$$
\end{theorem}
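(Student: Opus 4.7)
The strategy is to reduce the operator chain to a pointwise scalar chain on the representing functions via the Kubo--Ando framework, and then prove the scalar chain by a direct integral argument, as an alternative to simply quoting Corollary~\ref{sec2_corollary01}.

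First I would make the Kubo--Ando reduction explicit. For a strictly positive operator $A$, the functional-calculus map $g \mapsto A^{1/2}g\bigl(A^{-1/2}BA^{-1/2}\bigr)A^{1/2}$ is monotone in $g$ pointwise on $(0,\infty)$ by the spectral theorem applied to the positive operator $A^{-1/2}BA^{-1/2}$. Since the representing functions of $A\sharp_{w}B$, $A\nabla_{w}B$ and $A\ell_{w}B$ are $t^{w}$, $(1-w)+wt$ and $L_{w}(1,t)$ respectively, and since linear combinations of operator functions correspond to linear combinations of the associated scalar functions, the entire operator chain follows once I establish, for every $t>0$ and $v\in(0,1)$,
\[
t^{v} \;\leq\; (1-v)t^{v/2}+v\,t^{(1+v)/2} \;\leq\; L_{v}(1,t) \;\leq\; \tfrac{1}{2}\bigl(t^{v}+(1-v)+vt\bigr) \;\leq\; (1-v)+vt.
\]

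Next, for the two inner inequalities I would use the integral representation
\[
L_{v}(1,t) \;=\; (1-v)\int_{0}^{1} t^{vs}\,ds \;+\; v\int_{0}^{1} t^{\,v+(1-v)s}\,ds,
\]
obtained from the definition \eqref{sec1_eq02} with $a=1$, $b=t$ by rewriting each of the two ratios via the identity $\int_{\alpha}^{\beta} t^{s}\,ds = (t^{\beta}-t^{\alpha})/\log t$. For every fixed $t>0$ the maps $s\mapsto t^{vs}$ and $s\mapsto t^{v+(1-v)s}$ are convex on $[0,1]$ (their second derivatives involve $(\log t)^{2}\geq 0$), so the classical Hermite--Hadamard inequality \eqref{sec1_eq05} on $[0,1]$ sandwiches each integral between its midpoint value and its endpoint average. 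This yields $\int_{0}^{1} t^{vs}\,ds\in[t^{v/2},(1+t^{v})/2]$ and $\int_{0}^{1} t^{v+(1-v)s}\,ds\in[t^{(1+v)/2},(t^{v}+t)/2]$; taking the $(1-v),v$-convex combination of these two chains produces exactly the second and third inequalities.

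For the outer two inequalities I would invoke weighted AM--GM. The first follows because $(t^{v/2})^{1-v}(t^{(1+v)/2})^{v}=t^{v}$, and the fourth reduces, after cancellation, to the standard Young inequality $t^{v}\leq(1-v)+vt$. I do not expect a substantive obstacle: the Kubo--Ando monotonicity step is standard and only requires the scalar ordering at each $t>0$ (not that the intermediate expressions themselves be genuine operator means), while the scalar inequalities decompose cleanly into the integral representation plus Hermite--Hadamard plus Young. The only mildly delicate verification is the integral representation of $L_{v}(1,t)$, but that is a one-line computation from the definition.
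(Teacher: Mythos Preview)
Your proposal is correct. The Kubo--Ando reduction and the two outer inequalities (via weighted AM--GM and Young) match the paper exactly. The difference is in the two middle inequalities. The paper handles them separately: for $L_v(1,t)\leq \tfrac{1}{2}\bigl(t^v+(1-v)+vt\bigr)$ it simply cites \cite[Lemma~2.3]{PSMA2016}, while for $(1-v)t^{v/2}+v\,t^{(1+v)/2}\leq L_v(1,t)$ it invokes the elementary logarithmic-mean inequality $\frac{x^2-1}{\log x^2}\geq x$ with the substitutions $x=t^{v/2}$ and $x=t^{(v-1)/2}$. Your integral representation
\[
L_v(1,t)=(1-v)\int_0^1 t^{vs}\,ds+v\int_0^1 t^{\,v+(1-v)s}\,ds
\]
followed by Hermite--Hadamard on $[0,1]$ delivers both bounds at once and is fully self-contained. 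The two routes are ultimately the same idea: the paper's inequality $\frac{x^2-1}{\log x^2}\geq x$ is exactly the midpoint side of Hermite--Hadamard for $s\mapsto x^{2s}$ on $[0,1]$, so your argument simply makes this structure explicit and simultaneously captures the trapezoid side as well, avoiding the external reference.
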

\begin{proof}
It is sufficient to prove the following scalar inequalities:
\begin{equation}\label{sec4_proof_theorem4.1_eq01}
t^{v}\leq (1-v)t^{v/2}+vt^{(1+v)/2} \leq L_v(1,t) \leq \frac{1}{2}\left(t^v+(1-v)+vt\right)\leq (1-v) +vt
\end{equation}
where
$$
 L_v(1,t):= \frac{1}{\log t}\left(\frac{1-v}{v}\left(t^v-1\right)+\frac{v}{1-v}\left(t-t^v\right)\right)\,\,(t>0,\,\,v\in(0,1)).
$$
The fourth inequality in \eqref{sec4_proof_theorem4.1_eq01} is trivial and third one in \eqref{sec4_proof_theorem4.1_eq01} was proven in \cite[Lemma 2.3]{PSMA2016}.
The first inequality in \eqref{sec4_proof_theorem4.1_eq01} can be proven by the fact that the arithmetic mean is greater than or equal to the geometric mean as 
$(1-v)t^{v/2}+vt^{(1+v)/2} \geq t^{v(1-v)/2}t^{v(1+v)/2}=t^v$. The second inequality in  \eqref{sec4_proof_theorem4.1_eq01} can be proven by the use of the following first inequality:
\begin{equation}\label{sec4_proof_theorem4.1_eq02}
\frac{x^2-1}{\log x^2} \geq x >0.
\end{equation}
Putting $x:=t^{v/2}$ and $x:=t^{(v-1)/2}$ in \eqref{sec4_proof_theorem4.1_eq02}, we have respectively
$$
t^{v/2}\leq \frac{t^v-1}{v \log t}\,\,\,\,\text{and}\,\,\,\,t^{(v-1)/2}\leq \frac{t^{v-1}-1}{(v-1)\log t}\Leftrightarrow t^{(1+v)/2}\leq \frac{t-t^v}{(1-v)\log t}.
$$
Multiplying  the first and second inequality in the above by $(1-v)$ and $v$ and then summing them, we obtain  the second inequality in  \eqref{sec4_proof_theorem4.1_eq01}. Finally, replacing $t$ by $A^{-1/2}BA^{-1/2}$ in the inequalities \eqref{sec4_proof_theorem4.1_eq01} and then multiplying  both sides by $A^{1/2}$, we complete the proof.
\end{proof}

The upper bound of $A\ell_vB$ has already given in \cite[Theorem 2.4]{PSMA2016}. But the lower bound of  $A\ell_vB$ is a new result in Theorem \ref{sec4_theorem4.1}.

\section*{Acknowledgements}
The authors would like to thank the referees for their careful and insightful comments to improve our manuscript.
The author (S.F.) was partially supported by JSPS KAKENHI Grant Number 16K05257.


\begin{thebibliography}{99}
\bibitem{CeDra}P. Cerone and S.S. Dragomir, {\it Ostrowski type inequalities for functions whose derivatives satisfy certain convexity assumptions}, Demonstratio Math., {\bf 37}(2) (2004) 299--308.
\bibitem{DraAg}S.S. Dragomir and R. P. Agarwal, {\it Two inequalities for differentiable mappings and applications to special means of real
numbers and to trapezoidal formula}, Appl. Math. Lett., {\bf 11}, (5)(1998), 91--95. 
\bibitem{DraCeSo1} S. S. Dragomir, P. Cerone and A. Sofo, {\it Some remarks on the midpoint rule in numerical integration}, Studia Univ. Babes-Bolyai, Math., {\bf XLV}(1) (2000), 63--74.
\bibitem{DraCeSo2} S. S. Dragomir, P. Cerone and A. Sofo, {\it Some remarks on the trapezoid rule in numerical integration}, Indian J. Pure Appl. Math., {\bf 31}(5) (2000), 475--494.
\bibitem{FaLaBa} A. El Farissi, Z. Latreuch and B. Balaidi, {\it Hadamard type inequalities for near convex functions}, Gazeta Matematica Seria A, No. 1-2/2010. 
\bibitem{FM2020} S.Furuichi and H.R.Moradi, {\it  Advances in mathematical inequalities}, De Gruyter, 2020.
\bibitem{KA1980} F.Kubo and T.Ando, {\it Means of positive operators}, Math. Ann.,{\bf 264}(1980), 205--224.
\bibitem{Mit2010} F. C. Mitroi-Symeonidis, {\it About the precision in Jensen-Steffensen inequality}, An. Univ. Craiova Ser. Mat. Inform., {\bf37}(4) (2010), 73--84.
\bibitem{PSMA2016} R.Pal, M.Singh, M.S.Moslehian and J.S.Aujla, {\it A new class of operator monotone functions via operator means}, Linear and Multilinear Algebra, {\bf 64}(12)(2016), 2463--2473.


\end{thebibliography}
\end{document}